\titlespacing{\section}{0pt}{3.3ex}{2ex}
\titlespacing{\subsection}{0pt}{3.3ex}{1.65ex}
\titlespacing{\subsubsection}{0pt}{3.3ex}{1ex}
\newtheorem{theorem}{Theorem}          
\title{Guess My Number! From Binary Tricks to General Base Representations - How Many Cards Do You Need?}
\author{
 Guglielmo Vesco\\
DFIS\\
Politecnico di Milano, MI \\
  \texttt{guglielmo.vesco@gmail.com} \\
}
\begin{document}

\maketitle

\begin{abstract}
We revisit the classic “guess my number” game and extend it from its familiar binary form to representations in any integer base. For each base we derive formulas for the number of cards needed to identify a given integer and, conversely, for the largest integer that can be determined when the number of cards is fixed. Both analysis and graphical evidence show that base~2 is optimal: it requires the fewest cards for any given number and, for a fixed card count, allows the widest range of integers to be guessed. Complete proofs are provided in the Appendix.
\end{abstract}  \keywords{binary representation, positional numeral systems, number guessing game}


\section*{Introduction}

Walking through the streets of Naples, one might encounter Giuseppe Polone (shown in the photo in the top panel of Figure~\ref{fig:figure1}), a master of constructing magic squares, capable of producing designs as large as $25\times 25$ \cite{Polone2025}. It is said that Polone’s fascination began decades ago, when a 1978 journey through the Amazon left him stranded for weeks; to pass the time he invented crosswords and magic squares and developed novel methods for creating ever-larger ones. Among the many mathematical games he now shares with curious passersby, one is particularly striking: the guessing of a secret number between $1$ and $60$.

Polone performs this feat by displaying six special cards (see the bottom panel of Figure~\ref{fig:figure1}). The participant merely indicates on which of these cards the chosen number appears, and with only this information Polone can reveal the secret number without fail.

\begin{figure}[H]
\centering
\begin{minipage}{.8\textwidth}
\centering
 \includegraphics[width=.7\textwidth]{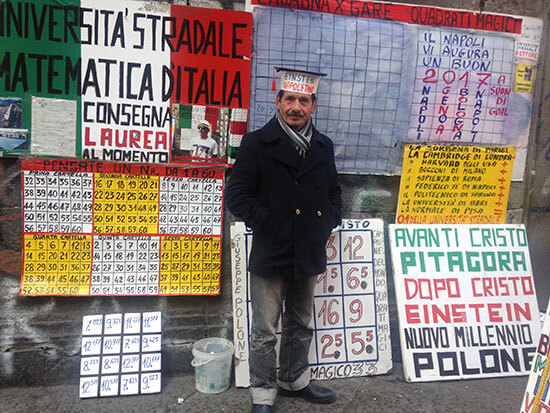}
 \vspace{10mm}
\end{minipage}

\begin{minipage}{.8\textwidth}
\centering
 \includegraphics[width=.85\textwidth]{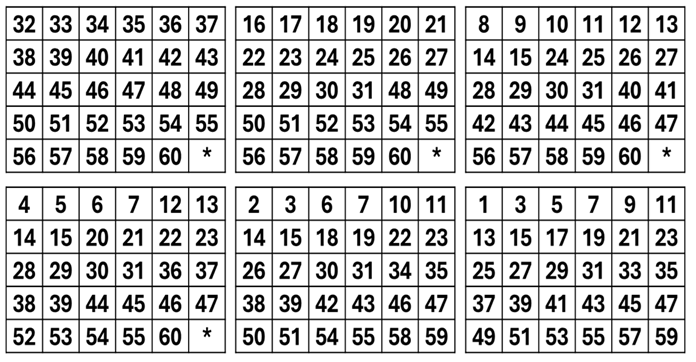}
\end{minipage}
\caption{Top: Giuseppe Polone during a street performance in Naples (photo sourced from \cite{polonephoto}).  
Bottom: the six binary tables (cards) used in the classic number-guessing game for integers from $1$ to $60$.}
\label{fig:figure1}
\end{figure}

Although captivating when performed in the street, this is in fact a well-known mathematical game (see, for example, \cite{Gardner1958,Yingprayoon2019,LongaBinaryCardTrick,ChalkdustBinaryTrick2016}), often used to introduce the concept of representing integers in base~$2$.

The idea is simple: every positive integer can be written as a sum of powers of two. Each of the six cards contains all numbers whose binary decomposition includes a specific power of two—one card for $2^5$, another for $2^4$, and so on down to $2^0=1$. By indicating the cards on which the secret number appears, the participant is effectively revealing which powers of two occur in its decomposition. The “magician” merely sums those powers to reconstruct the number.

For example, if the chosen number is $45$, it appears on the cards for $2^5=32$, $2^3=8$, $2^2=4$, and $2^0=1$. Adding these,
\[
32 + 8 + 4 + 1 = 45,
\]
the secret number is immediately determined. If $p$ is the largest exponent of two represented among the cards and $N_{card}$ represents the total number of cards, the largest number that can be guessed is
\[
2^{p+1}-1 = 2^{N_{card}} - 1 .
\]
Thus with six cards one can identify any integer from $1$ to $63$.

One may naturally ask whether the game can be adapted to other bases. Such variants, however, generally require additional information, since one must also specify the coefficient multiplying each power of the base. An elegant example is given by Sezin~\cite{Sezin2009}, who proposes a scheme based on four cards containing the integers from $1$ to $64$ written in different colors to exploit base~$4$. In this construction, each card corresponds to a power of $4$, while the color encodes the coefficient $1$, $2$, or $3$. This method does not rely on a yes/no questioning framework, but instead distinguishes multiple mutually exclusive states through color.

In the next section, we adopt a different perspective and extend the game to arbitrary bases while preserving a yes/no framework. Within this setting, we derive a general formula for the number of cards required to guess a number in any base representation. We also show that base~$2$ is optimal: it minimizes the number of cards needed for a given integer and, conversely, for a fixed number of cards, allows the largest range of integers to be guessed.

\section*{Beyond Binary: The Game in Any Base}

In a base~$b$ different from two, it is not enough to know whether a certain power of the base appears in the decomposition of a number: one must also specify the coefficient by which that power is multiplied, which can take values from $1$ up to $b-1$. For example,
\[
23 = 2\cdot 3^2 + 1\cdot 3^1 + 2\cdot 3^0
\]
is the expansion of $23$ in base~$3$.

To guess numbers using a yes/no scheme similar to Polone’s game, we therefore need a separate card for each non-zero multiple of every power of~$3$. Concretely, for each power of~$3$ we must have one card containing the numbers in which that power appears with coefficient $1$, and another card for those where it appears with coefficient $2$. In the example shown in Figure~\ref{fig:figure2}, to guess any number between $1$ and $26$ (that is, up to $3^3-1$) six cards are required: one for $3^2$, one for $2\cdot3^2$, one for $3^1$, one for $2\cdot3^1$, one for $3^0$, and one for $2\cdot3^0$.

Continuing with the example of the number $23$ discussed above, let us see how it can be guessed using the six cards shown in Figure~\ref{fig:figure2}. We observe that $23$ appears in the top central card, the top right card, and the bottom right card. These cards correspond, respectively, to the presence of $2\cdot 3^2$, $1\cdot 3^1$, and $2\cdot 3^0$ in the base-$3$ decomposition of the number. Summing these contributions, we obtain
\[
2\cdot 3^2 + 1\cdot 3^1 + 2\cdot 3^0 = 23,
\]
which correctly reconstructs the chosen number.

\begin{figure}[H]
\centering
\includegraphics[width=.7\textwidth]{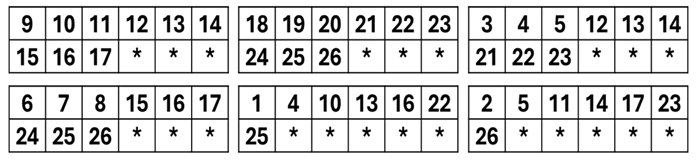}
\caption{Example of the game in base $3$: the six tables needed to guess any
number from $1$ to $26$.  
Each pair of tables corresponds to one power of $3$, with separate tables
for coefficients $1$ and $2$ of that power.}
\label{fig:figure2}
\end{figure}

In general, when numbers are represented in base~$b$, each power of~$b$ requires $(b-1)$ cards, one for each possible non-zero coefficient. If $p$ is the highest power of~$b$ needed in the decomposition of a given number $n$, the total number of cards $N_{card}$ required to guess any number in this range is
\begin{equation}
N_{card} = (b-1)\,(p+1).
\label{eq:1}
\end{equation}
In Polone's game $b=2$, and formula~\eqref{eq:1} reduces to
\[
N_{card} = (2-1)\,(p+1) = p+1 .
\]
Because $p = \lfloor \log_{b} n \rfloor$ (where $\lfloor x \rfloor$ denotes the floor of $x$), equation~\eqref{eq:1} can also be written as
\begin{equation}
N_{card} = (b-1)\,\bigl(\lfloor \log_{b} n \rfloor + 1\bigr).
\label{eq:2}
\end{equation}

Figure~\ref{fig:figure3} shows, for bases $b=2$ through $10$, how the number of cards $N_{card}$ grows with the representable integer $n$, as given by equation~\eqref{eq:2}. For each fixed base the growth is step-like: $N_{card}$ increases by $(b-1)$ each time $n$ crosses a power of $b$, reflecting the need for an additional block of $(b-1)$ cards whenever a new power enters the representation.

Figure~\ref{fig:figure4} offers the complementary view: for selected integers $n=8$ to $16$, it plots the required number of cards as a function of the base $b$. In this complementary view we can appreciate that the dependence on $b$ is not always strictly decreasing for a fixed $n$. For example, when $n=8$, both $b=2$ and $b=3$ require the same number of cards, and when $n=16$, base $4$ requires more cards than base $5$. Despite such local ties or reversals, base~$2$ always provides at least one of the minima of $N_{card}$ for every integer $n$. This statement is proved in Theorem~1 of the Appendix.

Figure~\ref{fig:figure5} displays the maximum representable number as $N_{card}$ varies, again for bases $2$ through $10$. These plots show that, for a fixed number of cards, the binary representation covers the greatest range of integers. The demonstration of this statement is provided in Theorem~2 of the Appendix.

\begin{figure}[H]
\centering
\includegraphics[width=1\textwidth]{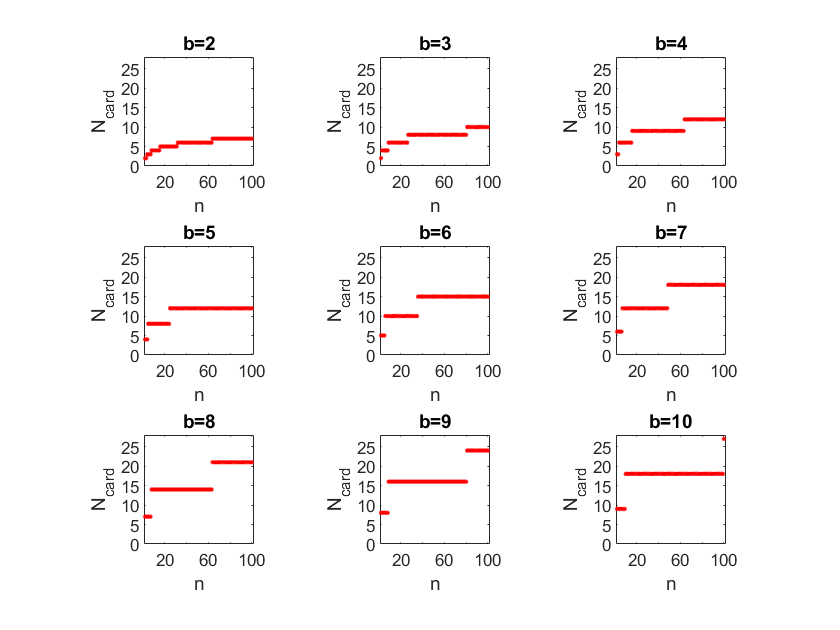}
\caption{Number of cards $N_{card}$ required to represent integers as a function of $n$, for bases $b=2,\dots ,10$.  
For each fixed base the stepwise growth reflects the addition of a new block of $(b-1)$ cards whenever $n$ passes a power of $b$.}
\label{fig:figure3}
\end{figure}

\begin{figure}[H]
\centering
\includegraphics[width=1\textwidth]{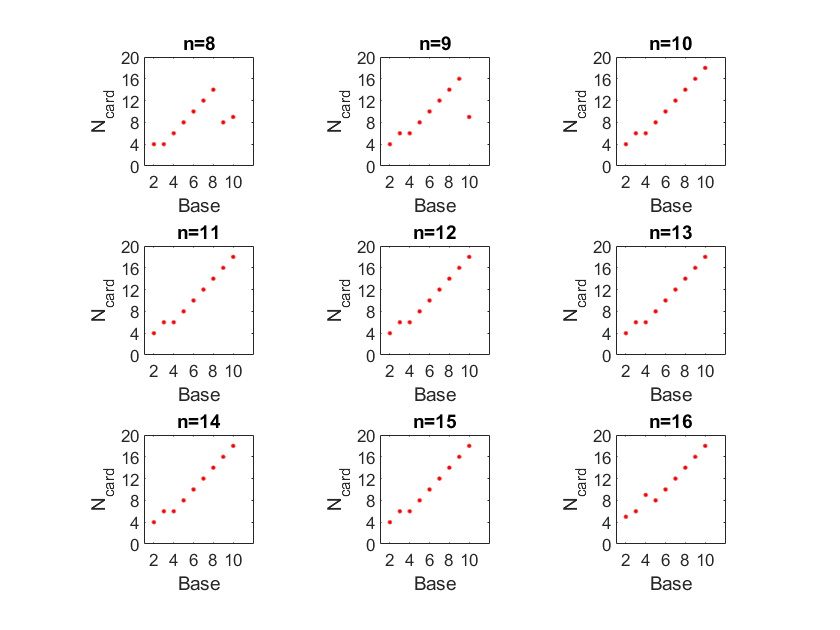}
\caption{For integers $n=8$ to $16$, the number of cards $N_{card}$ required
to guess $n$ as a function of the base $b$.}
\label{fig:figure4}
\end{figure}

Finally, it is natural to fix the number of available cards and ask which base allows the widest range of integers to be guessed. Because each power of~$b$ requires $(b-1)$ cards, the set of integers that can be represented with $N_{card}$ cards extends from $1$ up to
\begin{equation}
b^{\lfloor N_{card}/(b-1) \rfloor} - 1 ,
\end{equation}
where $p$ is the largest integer power of~$b$ supported by the available cards. The floor is needed because $p+1 = N_{card}/(b-1)$ must be an integer, and when this quotient is not itself an integer we take its integer part.\\
As a concrete example, consider $N_{\mathrm{card}}=5$ cards and base $b=3$. Since each power of $3$ requires $b-1=2$ cards, we have $N_{\mathrm{card}}/(b-1)=5/2=2.5$. Only two full powers of $3$ can therefore be supported, corresponding to $3^0$ and $3^1$, while the cards required to include $3^2$ are incomplete. Taking the floor yields $\lfloor 5/2 \rfloor = 2$, and Eq.~(3) gives a maximum representable integer equal to $3^2-1=8$. Without the floor function, one would incorrectly assume access to the next power of the base.

\begin{figure}[H]
\centering
\includegraphics[width=1\textwidth]{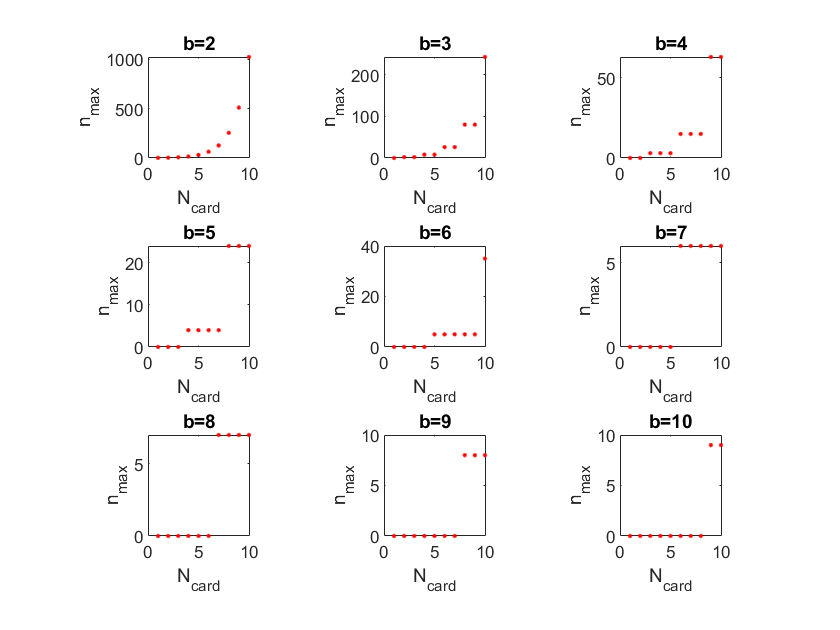}
\caption{Maximum representable integer as a function of the number of cards
$N_{card}$ for bases $b=2,\dots ,10$.}
\label{fig:figure5}
\end{figure}

\section*{Conclusion}

Starting from the classic street performance of Giuseppe Polone, we have developed a general framework for representing integers with collections of cards (or tables) indexed by the powers of an arbitrary base~$b$. This perspective leads naturally to an explicit formula for the number of cards required to identify any given integer and highlights, both visually and theoretically, the special efficiency of base~2.

Two principal results emerge. First, for every integer $n$, the binary representation requires the fewest cards to guarantee an unambiguous guess. Second, when the total number of cards is fixed, base~2 yields the widest range of integers that can be represented. Together these findings give precise quantitative meaning to the intuition that binary encoding is the most economical way to convey information means of information transmission for schemes based on yes/no queries

Beyond their recreational charm, these observations illustrate core ideas of mathematics and computer science. The game offers a concrete entry point to positional numeral systems, logarithmic growth, and the concept of information as counted in bits. It can enrich classroom discussions, inspire exploratory projects, or simply entertain audiences of all ages. Thus a simple street trick becomes a vivid demonstration of the universal power and elegance of binary representation.

\section*{Acknowledgements}
The authors are grateful to Dr.ssa~Serena Capanni, her son Lorenzo, and Dr.ssa~Francesca Contessini for a stimulating conversation about the story of the number-guessing game attributed to Giuseppe Polone.  
Their interest and insightful comments helped clarify the background of the tale and inspired the broader mathematical exploration presented in this paper.

\section*{Appendix}\label{sec:appendix}

\begin{theorem}\label{thm:1}
Let $N_{\mathrm{card}}(b)$ denote the minimum number of cards required to guess any integer between $1$ and $n$ when the guessing scheme is based on representations in base~$b$.
For every integer $n>0$ and every integer base $b>2$,
\[
N_{card}(2)=\bigl\lfloor \log_{2} n \bigr\rfloor + 1
\;\le\;
N_{card}(b)=(b-1)\bigl(\lfloor \log_{b} n \rfloor + 1\bigr).
\]
Moreover, equality holds only in the following cases:
\begin{enumerate}
   \item $b=3$ and $n=8$, where both sides equal $4$;
   \item $b=3$ and $n=2$, where both sides equal $2$.
\end{enumerate}
In all other cases the inequality is strict.
\end{theorem}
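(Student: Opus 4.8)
\emph{Proof strategy.} The plan is to reduce the whole statement to the elementary inequality $\log_2 b \le b-1$ together with a short finite check, exploiting the block structure of $N_{card}$. First I would set $k=\lfloor\log_b n\rfloor\ge 0$, so that $b^{k}\le n<b^{k+1}$; then $N_{card}(b)=(b-1)(k+1)$ is constant on the block $b^{k}\le n<b^{k+1}$, while $N_{card}(2)=\lfloor\log_2 n\rfloor+1$ is nondecreasing there. Hence the gap $N_{card}(b)-N_{card}(2)$ is smallest at the top of each block, so it is enough to understand $n$ of the form $b^{k+1}-1$ and, once equality is possible, to pin down exactly which $n$ in that block realize it.

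For the inequality I would chain
\[
\lfloor\log_2 n\rfloor\;\le\;\log_2 n\;<\;(k+1)\log_2 b\;\le\;(k+1)(b-1),
\]
where the middle step uses $n<b^{k+1}$ and the last uses $\log_2 b\le b-1$ for every integer $b\ge 2$; the latter is just $b\le 2^{\,b-1}$, a one-line induction, and is strict for $b\ge 3$. Since $(k+1)(b-1)$ is an integer and $\lfloor\log_2 n\rfloor$ is an integer strictly below it (strictness coming from $b>2$), we conclude $N_{card}(2)=\lfloor\log_2 n\rfloor+1\le(b-1)(k+1)=N_{card}(b)$.

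For the equality cases, $N_{card}(2)=N_{card}(b)$ is equivalent to $\lfloor\log_2 n\rfloor=(b-1)(k+1)-1$; substituting this into $\lfloor\log_2 n\rfloor<(k+1)\log_2 b$ gives the numeric constraint
\[
(k+1)\bigl(b-1-\log_2 b\bigr)<1 .
\]
For $b\ge 4$ one has $b-1-\log_2 b\ge 1$, equivalently $2^{\,b-2}\ge b$ (another short induction with base case $b=4$), so the left-hand side is at least $k+1\ge 1$, a contradiction; thus no equality can occur for $b\ge 4$. For $b=3$ the constraint becomes $(4/3)^{\,k+1}<2$, forcing $k\in\{0,1\}$, i.e. $n\in\{1,2\}$ or $n\in\{3,\dots,8\}$. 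Evaluating $N_{card}(2)$ and $N_{card}(3)=(b-1)(k+1)$ directly on these nine integers leaves exactly $n=2$ (both sides $2$) and $n=8$ (both sides $4$), with strict inequality otherwise.

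I expect the only genuinely delicate point to be the bookkeeping with the floor function—in particular, justifying cleanly that ``$\lfloor\log_2 n\rfloor<(k+1)(b-1)$ with both sides integral'' closes the non-strict inequality, and making sure the $b=3$ case check is exhaustive rather than merely illustrative. Everything else is routine monotonicity and induction, so I would keep the write-up lean and front-load the reduction to $\log_2 b\le b-1$.
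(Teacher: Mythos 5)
Your proof is correct, but it takes a genuinely different and more uniform route than the paper. The paper proves the inequality by splitting into cases on the base: for $b\ge 6$ it uses the bound $(b-1)\ge 2\log_2 b$ (justified via monotonicity of $(b-1)/(2\log_2 b)$), then treats $b=4$, $b=5$ and $b=3$ separately by solving linear inequalities in $m=\lfloor\log_b n\rfloor$, and finds the equality cases $(3,8)$ and $(3,2)$ by direct enumeration inside the $b=3$ case. You instead handle every base $b\ge 3$ at once with the single chain $\lfloor\log_2 n\rfloor\le\log_2 n<(k+1)\log_2 b\le (k+1)(b-1)$, where the key lemma is just $b\le 2^{\,b-1}$, and you convert the strict bound into the desired non-strict one by integrality of both sides; the equality analysis is then done systematically via the necessary condition $(k+1)\bigl(b-1-\log_2 b\bigr)<1$, which kills all $b\ge 4$ (since $2^{\,b-2}\ge b$) and confines $b=3$ to $k\in\{0,1\}$, after which a finite check isolates $n=2$ and $n=8$. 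What your route buys is uniformity and elementarity: no case split over bases and no calculus-style monotonicity arguments, plus a cleaner logical separation between the inequality and the classification of equality. The paper's route is more computational but makes each base's behaviour explicit. Two cosmetic points: your opening reduction to the top of each block is never actually used (the chain argument does all the work), the parenthetical attribution of strictness to $b>2$ is unnecessary (strictness already comes from $n<b^{k+1}$), and the $b=3$ check covers eight integers ($n\in\{1,2\}\cup\{3,\dots,8\}$), not nine; none of this affects correctness.
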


\begin{proof}
Although the claim feels intuitively reasonable, the verification is somewhat delicate because of the floor functions.

\medskip
\noindent\textbf{Step 1: Rewriting the inequality.}
Since
\[
(b-1)\bigl(\lfloor \log_{b} n \rfloor + 1 \bigr)
   =(b-1)\,\lfloor \log_{b} n \rfloor +(b-1),
\]
the desired inequality is equivalent to
\[
\lfloor \log_{2} n \rfloor
   \le (b-1)\,\lfloor \log_{b} n \rfloor + (b-2).
\]
Because $\log_{b} n=\log_{2} n/\log_{2} b$, we can write this as
\[
\lfloor \log_{2} n \rfloor
   \le (b-1)\,\bigl\lfloor \dfrac{\log_{2} n}{\log_{2} b} \bigr\rfloor + (b-2).
\]
Let
\[
m=\Bigl\lfloor \dfrac{\log_{2} n}{\log_{2} b}\Bigr\rfloor ,
\qquad
\log_{2} n = m\log_{2} b + r\log_{2} b
\quad\text{with } 0 \le r < 1.
\]

\medskip
\noindent\textbf{Step 2: Bases $b\ge 6$.}
If $n \ge b$ then $m \ge 1$. For real $x\ge6$, define
\[
f(x)=\dfrac{x-1}{2\log_{2} x}.
\]
A derivative calculation shows $f'(x)>0$ for $x>1$, hence $f$ is increasing and $f(6)>1$. Thus, for all $x \ge 6$,
\[
(x-1) \ge 2\log_{2} x .
\]
Therefore
\[
(x-1)m \ge 2m\log_{2} x
       = m\log_{2} x + m\log_{2} x
       \ge m\log_{2} x + r\log_{2} x
       = \log_{2} n .
\]
Since $x-2>0$,
\[
\lfloor \log_{2} n\rfloor<\log_{2} n < (x-1)m + (x-2),
\]
as required.

If instead $n<b$ we have $m=0$, so the right–hand side equals $b-2$. Using the fact that $\log_{2} b/(b-2)$ is decreasing for $b>1$ (by an argument entirely analogous to the derivative check above) and equals $1$ at $b=4$, we conclude that $\log_{2} b < b-2$ for every $b \ge 6$. Hence
\[
\lfloor \log_{2} n\rfloor<\log_{2} n < \log_{2} b < b-2,
\]
and the claim again holds.

\medskip
\noindent\textbf{Step 3: Intermediate bases $b=4$ and $b=5$.}
For $b=5$ we need
\[
3 + 4m > m\log_{2} 5 + \log_{2} 5
\quad\Longleftrightarrow\quad
m > \dfrac{\log_{2} 5 - 3}{4 - \log_{2} 5} \approx -0.40 .
\]
Since $m \ge 0$, this holds for all $n$.

For $b=4$ we obtain
\[
2 + 3m > m\log_{2} 4 + \log_{2} 4
\quad\Longleftrightarrow\quad
m > \dfrac{\log_{2} 4 - 2}{3 - \log_{2} 4} = 0 .
\]
Thus the inequality holds for every $n \ge 4$. When $1 \le n \le 3$, we have $\lfloor \log_{2} n \rfloor \in \{0,1\}$ and the right–hand side equals $(b-2)=2$, so the inequality is also satisfied.

\medskip
\noindent\textbf{Step 4: Base $b=3$.}
Here the condition becomes
\[
1 + 2m > m\log_{2} 3 + \log_{2} 3
\quad\Longleftrightarrow\quad
m > \dfrac{\log_{2} 3 - 1}{2 - \log_{2} 3} \approx 1.41 .
\]
Thus the inequality holds for $m \ge 2$, i.e.\ for $n \ge 9$. When $n=8$ we have equality:
\[
\lfloor \log_{2} 8 \rfloor = 3,
\qquad
2\lfloor \log_{3} 8 \rfloor + 1 = 2\cdot 1 + 1 = 3 .
\]
For $4 \le n \le 7$ we get
\[
\lfloor \log_{2} n \rfloor = 2,
\qquad
2\lfloor \log_{3} n \rfloor + 1 = 3,
\]
so the inequality is strict. Finally, for small $n$ we check directly:
\[
\begin{array}{lll}
n=3:& \lfloor \log_{2}3 \rfloor = 1, & 2\lfloor \log_{3}3 \rfloor + 1 = 3;\\[2pt]
n=2:& \lfloor \log_{2}2 \rfloor = 1, & 2\lfloor \log_{3}2 \rfloor + 1 = 1 \ \ (\text{equality});\\[2pt]
n=1:& \lfloor \log_{2}1 \rfloor = 0, & 2\lfloor \log_{3}1 \rfloor + 1 = 1.
\end{array}
\]

\medskip
Combining all cases, we conclude that
\[
\lfloor \log_{2} n \rfloor + 1
   \le (b-1)\bigl( \lfloor \log_{b} n \rfloor + 1 \bigr)
\]
for every integer $n>0$ and every integer base $b>2$, with equality occurring only when $(b,n)=(3,8)$ or $(3,2)$.
\end{proof}

\bigskip\bigskip\bigskip

\begin{theorem}\label{thm:2}
Let $N_{card}\in\mathbb{N}$ and $b>2$. Then
\[
2^{ N_{card}} - 1
   > b^{\lfloor N_{card}/(b-1)\rfloor} - 1 .
\]
\end{theorem}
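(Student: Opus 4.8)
The plan is to strip the harmless $-1$ from both sides and prove the equivalent multiplicative statement $2^{N_{card}} > b^{\lfloor N_{card}/(b-1)\rfloor}$. The first step is to remove the floor in the exponent on the right by the trivial bound $\lfloor N_{card}/(b-1)\rfloor \le N_{card}/(b-1)$, which, since $b>1$, gives
\[
b^{\lfloor N_{card}/(b-1)\rfloor} \;\le\; b^{N_{card}/(b-1)} \;=\; \bigl(b^{1/(b-1)}\bigr)^{N_{card}}.
\]
So it suffices to show $\bigl(b^{1/(b-1)}\bigr)^{N_{card}} < 2^{N_{card}}$; taking $N_{card}$-th roots (legitimate because $N_{card}\ge 1$ and both bases are positive) this collapses to the single inequality $b^{1/(b-1)} < 2$, i.e.\ $b < 2^{\,b-1}$ --- a statement no longer involving $N_{card}$.

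The remaining work is the elementary fact that $2^{b-1} > b$ for every integer $b \ge 3$. I would dispatch this by a one-line induction (base case $2^{2}=4>3$; step $2^{b}=2\cdot 2^{b-1} > 2b \ge b+1$), or even more directly by the binomial estimate $2^{b-1} = \sum_{k=0}^{b-1}\binom{b-1}{k} \ge 1+(b-1)+\binom{b-1}{2} > b$, valid since $b-1\ge 2$. Alternatively one may reuse the monotonicity computation from Theorem~\ref{thm:1}, Step~2, which already yields $b-1 > 2\log_2 b > \log_2 b$ for $b\ge 6$, with $b=3,4,5$ checked by hand, whence $2^{b-1} > b$.

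The only point needing care is the boundary value $N_{card}=0$: there both sides of the asserted strict inequality equal $0$, so the statement is to be read for $N_{card}\ge 1$ (the only meaningful range, since guessing anything requires at least one card). For $N_{card}\ge 1$ strictness is automatic, because $\bigl(b^{1/(b-1)}\bigr)^{N_{card}} < 2^{N_{card}}$ is strict exactly when $b^{1/(b-1)}<2$ is, and this is preserved under the (weak) floor bound used above. Since none of the individual steps is delicate, I expect a short proof; the "obstacle", such as it is, is merely keeping the chain of inequalities strict throughout and acknowledging the $N_{card}=0$ corner.
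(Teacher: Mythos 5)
Your proposal is correct and follows essentially the same route as the paper: drop the $-1$, bound $b^{\lfloor N_{card}/(b-1)\rfloor}\le b^{N_{card}/(b-1)}$, and reduce the whole statement to the single inequality $b<2^{b-1}$ (the paper writes it as $\log_2 b/(b-1)<1$ and cites the monotonicity of $g(b)=\log_2 b/(b-1)$ from Theorem~\ref{thm:1}, whereas you verify it by a short induction or binomial bound --- a cosmetic difference). Your remark that $N_{card}=0$ makes both sides equal, so the strict inequality really requires $N_{card}\ge 1$, is a valid caveat the paper's proof silently glosses over.
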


\begin{proof}
Because
\[
b^{\,N_{card}/(b-1)} - 1 \;\ge\;
b^{\,\lfloor N_{card}/(b-1)\rfloor} - 1 ,
\]
it is enough to show $2^{N_{card}}-1 > b^{\,N_{card}/(b-1)}-1,
$
or equivalently $
2^{N_{card}} > b^{\,N_{card}/(b-1)} .
$
and obtain
$ b^{\,N_{card}/(b-1)}
   = 2^{\,(\log_{2} b)\,N_{card}/(b-1)} .
$\\
Dividing $2^{N_{card}}$ by this quantity gives
\[
\frac{2^{N_{card}}}{2^{(\log_{2} b)\,N_{card}/(b-1)}}
   = 2^{\,N_{card}\!\left(1-\frac{\log_{2} b}{\,b-1}\right)}
   = \bigl(2^{\,1-\frac{\log_{2} b}{\,b-1}}\bigr)^{N_{card}} .
\]
Thus $2^{N_{card}} > b^{N_{card}/(b-1)}$ holds precisely when $
2^{\,1-\frac{\log_{2} b}{\,b-1}} > 1$, that is when $
\frac{\log_{2} b}{\,b-1} < 1$.\\
The strict decrease of
\(
g(b)=\frac{\log_{2} b}{\,b-1}
\)
was established in the proof of Theorem~\ref{thm:1}. Since $g(2)=1$ and $g$ is decreasing, we have $g(b)<1$ for all $b>2$. Therefore $2^{\,1-\frac{\log_{2} b}{\,b-1}} > 1$,
and consequently
\[
2^{N_{card}}
  > 2^{(\log_{2} b)\,N_{card}/(b-1)}
  = b^{\,N_{card}/(b-1)}.
\]
It follows that
\[
2^{N_{card}} - 1
   > b^{\,N_{card}/(b-1)} - 1
   \ge b^{\,\lfloor N_{card}/(b-1)\rfloor} - 1 ,
\]
which completes the proof.
\end{proof}

\end{document}